\def\qed{{\hbadness=10000\hfill\ \vbox{\hrule height.09ex
   \hbox{\vrule width.09ex height1.55ex depth.2ex \kern1.8ex
   \vrule width.09ex height1.55ex depth.2ex}\hrule height.09ex}\break
   \bigskip}}
\newtheorem{theorem}{Theorem}[section]
\newtheorem{lemma}[theorem]{Lemma}
\newtheorem{corollary}[theorem]{Corollary}
\newtheorem{proposition}[theorem]{Proposition}
\theoremstyle{definition}
\newtheorem{example}[theorem]{Example}
\theoremstyle{remark}
\newtheorem{remark}[theorem]{Remark}
\newcommand{\N}{\mathbb{N}}
\newcommand{\Z}{\mathbb{Z}}
\newcommand{\n}{\noindent}
\begin{document}

\title{Finite Just Non-Dedekind Groups}

\author{V.K. Jain\footnote{supported by UGC, Government of India}
 ~and R.P. Shukla\\
Department of Mathematics, University of Allahabad \\
Allahabad (India) 211 002\\
{\bf Email:} jaijinenedra@gmail.com; rps@mri.ernet.in}

\date{}
\maketitle

\n {\bf Running Title:} Finite Just Non-Dedekind Groups.

\vspace{2cm}

\n \textbf{Abstract:} A group is just non-Dedekind (JND) if it is not a Dedekind group but all of whose proper homomorphic images are Dedekind groups. The aim of the paper is to classify finite JND-groups.

\newpage

\n \textbf{Key-words:} Monolith, Characteristically simple, Semisimple.

\smallskip

\n \textbf{MSC 2000:} 20E99, 20D99
\section{Introduction}

A group is called Dedekind if all its subgroups are normal.
 By \cite{b1}, a group is Dedekind if and only if it is abelian or the direct product of a quaternion group of order $8$, an elementary abelian $2$-group and an abelian group with all its elements of odd order (one can also see its proof in \cite[5.3.7, p.143]{rob}).

Given a group theoretical property ${\cal P}$, a just non ${\cal P}$-group is a group which is not ${\cal P}$-group but all of whose proper homomorphic images are ${\cal P}$-groups; for brevity we shall call these JN${\cal P}$-groups. 
M.F. Newman studied just nonabelian (JNA) groups in \cite{new1,new2}. S. Franciosi and others studied solvable just nonnilpotent (JNN) groups in \cite{fs} and D.J.S. Robinson studied solvable just non-T (JNT) groups in     \cite{rob73}(here the group with property T means in the group normality is a transitive relation).

The aim of this paper is to classify finite JND-groups. In Section $2$, we prove that JND-groups are monolithic group. Section 3 deals with solvable JND-groups and Section 4 shows that nonsolvable JND-groups are semisimple. Theorem \ref{th1} gives complete classification of finite semisimple JND-groups.

Let $G$ be a group. For, sets $X$, $Y$ of $G$, let $[X,Y]$ denote the subgroup of $G$ generated by $[x,y]=xyx^{-1}y^{-1},~x\in X,~y\in Y$.
The derived series of $G$ is $$G=G^{(0)} \geq G^{(1)} \geq \cdots \geq G^{(n)} \geq \cdots ,$$ where $G^{(n)}=[G^{(n-1)},G^{(n-1)}]$, the commutator subgroup of $G^{(n-1)}$. The lower central series of $G$ is $$G=\gamma_1(G) \geq \gamma_2(G) \geq \cdots \geq \gamma_n(G) \geq \cdots ,$$ where $\gamma_{n+1}(G)=[\gamma_n(G),G]$.
The group $G$ is called {\it solvable} of {\it derived length} $n$ (respectively {\it nilpotent} of {\it class $n$}) if $n$ is the smallest nonnegative integer such that $G^{(n)}=\{1\}$ (respectively $\gamma_{n+1}(G)=\{1\})$.

\section{Some basic properties of JND-groups}
We recall that a group is called {\em{monolithic}} if it has smallest nontrivial normal subgroup, called the {\em{monolith}} of $G$.  
In this section, we study some basic properties of JND-groups.
\smallskip  
 
\begin{proposition} \label{d1}
Let $G$ be a JND-group.
 Then $G$ is not contained in a direct product of Dedekind groups. 
\end{proposition}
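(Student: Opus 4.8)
The plan is to prove the contrapositive in a strengthened form: if $G$ embeds in a direct product of Dedekind groups then every element of prime order of $G$ is central, whereas a JND-group, not being Dedekind, necessarily has a non-central element of prime order. The whole argument rests on one elementary remark extracted from the structure theorem quoted in the introduction: \emph{in a Dedekind group $D$ every element of prime order lies in $Z(D)$}. For $D$ abelian this is trivial; for $D = Q_8 \times E \times A$ nonabelian (with $E$ elementary abelian of exponent $2$ and $A$ abelian of odd order) one has $Z(D) = Z(Q_8) \times E \times A$, and writing an element of prime order $p$ as $(q,e,a)$ the relation $q^p = e^p = a^p = 1$ forces $q \in Z(Q_8)$ and $a = 1$ when $p = 2$, and $q = e = 1$ when $p$ is odd; either way the element is central.

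First I would extract a good witness to $G$ not being Dedekind. Since a subgroup generated by normal subgroups of $G$ is normal in $G$, the failure of $G$ to be Dedekind produces a cyclic subgroup $\langle x\rangle$ that is not normal in $G$. Let $C = \bigcap_{g\in G} g\langle x\rangle g^{-1}$ be its normal core. I would then observe that $\langle x\rangle/C$ is a nontrivial, non-normal subgroup of $G/C$ — its own normal core in $G/C$ being $\big(\bigcap_g g\langle x\rangle g^{-1}\big)/C = \{1\}$ — so $G/C$ is not Dedekind; since $G$ is a JND-group this forces $C = \{1\}$. A nontrivial subgroup of the now core-free $\langle x\rangle$ is still non-normal (a normal subgroup of $G$ inside it would lie in the core) and still core-free, so replacing $x$ by a suitable power I may assume $x$ has prime order. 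Finally $\langle x\rangle \cap Z(G)$, being a subgroup of $Z(G)$, is normal in $G$ and contained in $\langle x\rangle$, hence trivial; in particular $x \notin Z(G)$.

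Now suppose, for contradiction, that $G$ is contained in $D_1 \times \cdots \times D_n$ with each $D_i$ Dedekind. Let $\pi_i \colon G \to D_i$ be the coordinate projections and $N_i = \ker \pi_i$, so that $\bigcap_i N_i = \{1\}$. Since $x$ has prime order, each $\pi_i(x)$ has order $1$ or that prime, hence is central in $D_i$ by the remark above; therefore $\pi_i([x,g]) = [\pi_i(x),\pi_i(g)] = 1$ for every $g \in G$, i.e. $[x,G] \subseteq N_i$ for all $i$, and so $[x,G] \subseteq \bigcap_i N_i = \{1\}$. Thus $x \in Z(G)$, contradicting the previous paragraph; hence no such embedding exists.

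The only step carrying genuine content is the production of a core-free, non-normal cyclic subgroup of prime order, and that is exactly where the full JND hypothesis is used, through the passage to $G/C$; everything after that is formal once one has the centrality of prime-order elements in Dedekind groups. The point I would be most careful about is the normal-core bookkeeping — in particular that $C$ lies in every conjugate of $\langle x\rangle$, which is what makes $\langle x\rangle/C$ a well-defined subgroup of $G/C$ with trivial core.
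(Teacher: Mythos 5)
Your argument is correct for finite $G$ and reaches the conclusion by a genuinely different route from the paper's. The paper assumes the embedding $G\leq H=\prod_j H_j$ first, observes from the structure theorem that squares and involutions in $H$ are central in $H$, deduces that \emph{every} nontrivial subgroup $N$ of $G$ contains a nontrivial element $x$ of $Z(H)$, and only then invokes the JND hypothesis — on $G/\langle x\rangle$ — to conclude $N\trianglelefteq G$, so that $G$ would be Dedekind, a contradiction. You invert the order: you use the JND hypothesis up front (quotienting by the normal core of a non-normal cyclic subgroup) to manufacture a single non-central witness of prime order, and then let the embedding kill it via the centrality of prime-order elements in Dedekind groups. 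Your version isolates exactly where JND enters and extracts a clean standalone fact (any subgroup of a product of Dedekind groups has all its prime-order elements central); the paper's version avoids any reduction on element orders and, as a byproduct, works uniformly for infinite index sets and torsion-free elements.

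Two points of generality, both fixable, where your write-up falls short of the statement as the paper intends it. First, you take a finite product $D_1\times\cdots\times D_n$, whereas the paper allows $\prod_{j\in I}H_j$ over an arbitrary index set (and Corollary 2.2 applies the proposition to such a product); nothing changes — replace $\bigcap_{i=1}^n N_i$ by $\bigcap_{i\in I}N_i$. Second, and more substantively, the step ``replacing $x$ by a suitable power I may assume $x$ has prime order'' presupposes that the witness $x$ has finite order. For finite $G$ this is automatic, but the proposition is stated for arbitrary JND-groups and the paper's proof does not assume finiteness. The repair stays inside your framework: postpone the prime-order reduction until after the embedding is assumed. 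Since squares are central in every Dedekind group, $\pi_i(x)^2\in Z(D_i)$ for all $i$, so $x^2\in Z\bigl(\prod_i D_i\bigr)$ and $\langle x^2\rangle\trianglelefteq G$; being contained in the core-free $\langle x\rangle$, it is trivial, so $x^2=1$ and your prime-order centrality remark applies with $p=2$. With those two adjustments the proof is complete in the stated generality.
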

\begin{proof} 
Let $\{ H_i\}_{i\in I}$ denote a family of Dedekind groups, where $I$ is an indexing set. 
Assume that $G$ is contained in $H=\prod_{j\in I} H_j$. Since $G$ is nonabelian, there exists $i\in I$ such that $H_i$ is nonabelian. 
By the classification Theorem for nonabelian Dedekind groups \cite[5.3.7, p.143]{rob}, square of each element of a nonabelian Dedekind group is central and its commutator subgroup is isomorphic to $\Z_2$.
 This implies that $G$ can not be simple.
  Take any nontrivial element $x \in G$.
   Then $x\in Z(G)$ if $x^2=1$ and $x^2 \in Z(G)$ if $x^2 \neq 1$ (for $G$ is contained in $H$).
    This proves that each subgroup of $G$ contains a nontrivial central element of $H$. 
    Let $N$ be a nontrivial subgroup of $G$.
     Let $x\in N \cap Z(H)$, $x \neq	1$.
      Since $G$ is JND, $G/\langle x \rangle$ is Dedekind, so $N/\langle x \rangle \trianglelefteq G/\langle x \rangle$, which proves that $N\trianglelefteq G$.
 Hence $G$ is Dedekind.
\end{proof} 

\smallskip
\begin{corollary} \label{d2}
Let $G$ be a JND-group. Then $G$ is monolithic.  
\end{corollary}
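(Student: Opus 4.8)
The plan is to derive Corollary \ref{d2} from Proposition \ref{d1} by contradiction: a group that is not monolithic embeds into a direct product of its proper homomorphic images, and if the group is JND those images are all Dedekind, which is impossible by Proposition \ref{d1}.

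So suppose, for contradiction, that $G$ is a JND-group which is \emph{not} monolithic. Let $\mathcal{N}$ denote the family of all nontrivial normal subgroups of $G$ and put $M=\bigcap_{N\in\mathcal{N}}N$. Since $G\in\mathcal{N}$ the family is nonempty, and $M\trianglelefteq G$. If $M\neq\{1\}$, then $M$ is contained in every nontrivial normal subgroup of $G$, i.e. $M$ is the monolith of $G$, contrary to assumption; hence $M=\{1\}$. In particular the subfamily $\mathcal{N}_0=\{N\in\mathcal{N}:N\neq G\}$ is nonempty (otherwise $M=G\neq\{1\}$), and $\bigcap_{N\in\mathcal{N}_0}N=M=\{1\}$.

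Next I would consider the diagonal homomorphism $\phi\colon G\to\prod_{N\in\mathcal{N}_0}G/N$ given by $g\mapsto(gN)_{N}$. Its kernel is $\bigcap_{N\in\mathcal{N}_0}N=\{1\}$, so $\phi$ is injective. For every $N\in\mathcal{N}_0$ we have $\{1\}\neq N\neq G$, so $G/N$ is a proper homomorphic image of $G$, hence a Dedekind group because $G$ is JND. Therefore $G$ is isomorphic to a subgroup of a direct product of Dedekind groups, contradicting Proposition \ref{d1}. This contradiction forces $G$ to be monolithic.

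I expect no serious obstacle here; the entire content is already carried by Proposition \ref{d1}. The only book-keeping point worth a moment's care is that $\mathcal{N}_0$ is genuinely nonempty — which is exactly the information extracted from $M=\{1\}$ — and, relatedly, that simple JND-groups (where $\mathcal{N}_0=\emptyset$) cause no trouble: for such $G$ one has $M=G\neq\{1\}$, so $G$ is already monolithic and never enters the contradiction. In the finite case one could shorten things further by choosing two distinct minimal normal subgroups $N_1,N_2$ with $N_1\cap N_2=\{1\}$ and using the embedding $G\hookrightarrow G/N_1\times G/N_2$ directly.
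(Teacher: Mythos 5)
Your proof is correct and takes essentially the same route as the paper: embed $G$ diagonally into the direct product of its proper quotients (all Dedekind since $G$ is JND) and invoke Proposition \ref{d1} to conclude the kernel, i.e.\ the intersection of all nontrivial normal subgroups, is nontrivial. The only cosmetic difference is that the paper first disposes of the JNA case separately (where $G^{(1)}$ already lies in every nontrivial normal subgroup) before applying Proposition \ref{d1}, whereas you apply it uniformly — which is fine, and your explicit handling of the simple/$\mathcal{N}_0=\emptyset$ case is a sensible extra check.
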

\begin{proof}
If $G$ is a JNA-group, there is nothing to prove for $G^{(1)}$ will be contained in each nontrival normal subgroup of $G$. 
Assume that $G$ is not JNA.
 Let ${\cal A}$ denote the set of all nontrivial normal subgroups of $G$.
  Then $G/H$ is Dedekind for all $H\in {\cal A}$.
   Further, since $G$ is not JNA, there exists $H\in {\cal A}$ such that $G/H$ is nonabelian.
    Therefore by Proposition \ref{d1}, the homomorphism from $G$ to $\prod_{H\in {\cal A}} G/H$ which sends $x\in G$ to $(xH)_{H\in {\cal A}}$ is not one-one.
     This proves that $\bigcap _{H\in {\cal A}} H \neq \{ 1\}$.   
\end{proof}
\smallskip
\begin{corollary} \label{da}
Let $G$ be as in Corollary \ref{d2}. 
Assume that $G^{(2)} \neq \{1\}$.
 Then the monolith of $G$ is $G^{(2)}$. 
\end{corollary}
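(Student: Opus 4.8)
The plan is to leverage Corollary \ref{d2}: $G$ is monolithic, so it has a monolith $M$, and we must identify $M$ with $G^{(2)}$. The strategy splits into two inclusions. For $M \subseteq G^{(2)}$: since $G^{(2)} \neq \{1\}$ by hypothesis and $G^{(2)}$ is normal in $G$, the monolith $M$ — being the smallest nontrivial normal subgroup — is contained in $G^{(2)}$. This direction is immediate and costs nothing.

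For the reverse inclusion $G^{(2)} \subseteq M$, the idea is to pass to the quotient $G/M$, which is Dedekind since $G$ is JND and $M \neq \{1\}$. The key structural input is the classification of Dedekind groups quoted in the introduction: a Dedekind group is either abelian or a direct product $Q_8 \times E \times A$ with $E$ elementary abelian $2$-group and $A$ abelian of odd order. In either case the derived subgroup of a Dedekind group is either trivial or isomorphic to $\Z_2$; in particular it is abelian, so the \emph{second} derived subgroup of any Dedekind group is trivial. Applying this to $G/M$ gives $(G/M)^{(2)} = \{1\}$, i.e. $G^{(2)}M/M = \{1\}$, which is exactly $G^{(2)} \subseteq M$. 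Combining the two inclusions yields $M = G^{(2)}$.

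I expect this to be essentially routine; the only point requiring a little care is the computation of the second derived subgroup of an arbitrary Dedekind group, but this follows directly from the quoted classification since $(G/M)^{(1)}$ is cyclic of order dividing $2$ and hence abelian. One should also double-check that the hypothesis "$G$ is not JNA" carried over from Corollary \ref{d2} is not actually needed here — indeed it is not, since the argument only uses that $G$ is JND and monolithic, and the case $G^{(2)} = \{1\}$ is excluded by hypothesis. If one wanted to be thorough one could remark that when $G^{(2)} = \{1\}$ the conclusion can fail (the monolith need not be $G^{(2)} = \{1\}$), which is why that case is excluded.
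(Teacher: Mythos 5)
Your proof is correct and follows essentially the same route as the paper: the inclusion $M \subseteq G^{(2)}$ comes from minimality of the monolith, and the reverse inclusion from the fact that the Dedekind quotient $G/M$ has abelian (trivial or order $2$) derived subgroup, hence trivial second derived subgroup. The only difference is cosmetic: the paper splits into the JNA and non-JNA cases, whereas you handle both uniformly by observing that every Dedekind group is metabelian.
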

\begin{proof}
By Corollary \ref{d2}, $G$ is monolithic.
 Let $K$ denote the monolith of $G$.     
 Then $K \subseteq G^{(2)}$. If $G$ is JNA, then $K=G^{(1)}$ and so $K=G^{(2)}$.
  If $G$ is JND but not JNA, then $G/K$ is nonabelian Dedekind.
   Now by \cite[5.3.7, p.143]{rob}, the commutator subgroup $G^{(1)}/K$ of $G/K$ is of order 2. 
   So $G^{(2)} \subseteq K$.
\end{proof}

\section{Finite solvable JND-groups}

In this section, we classify finite solvable JND-groups. Solvable JNA-groups with  nontrival center is characterized in \cite{new2} and centerless solvable JNA-groups have been classified in  \cite[Theorem 5.2, p.360]{new1}. So, it only remains to classify finite solvable JND-groups which are not JNA-groups.
\smallskip
\begin{proposition}\label{d3} 
Let $G$ be a JND-group. 
Let $Z(G)$, the center of $G$ be nontrivial.
 Then $G$ is a solvable JNA-group.
\end{proposition}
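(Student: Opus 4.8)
The plan is to prove the two assertions separately — solvability and the JNA property — with essentially all the content going into the second.

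First I would pin down the monolith and dispose of solvability. By Corollary~\ref{d2}, $G$ is monolithic; let $K$ be its monolith. Since $Z(G)$ is a nontrivial normal subgroup, $K\le Z(G)$. Being central, every subgroup of $K$ is normal in $G$, so minimality of $K$ forces $K$ to have no proper nontrivial subgroup, i.e. $K\cong\Z_p$ for a prime $p$. Moreover $G/K$ is Dedekind (a proper homomorphic image of the JND-group $G$), hence solvable, and $K$ is abelian, so $G$ is solvable. It remains to show that $G$ is JNA. Since $G$ is not Dedekind it is nonabelian, so $G^{(1)}\neq\{1\}$ and therefore $K\le G^{(1)}$.

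Next I would set up the reduction to a stem extension of $Q_8$. Suppose for contradiction that $G$ is not JNA. Then some proper quotient $G/N$ with $N\neq\{1\}$ is nonabelian; since $K\le N$, $G/N$ is a quotient of $G/K$, and a quotient of an abelian group is abelian, so $G/K$ must itself be nonabelian. As $G/K$ is also Dedekind, the classification of nonabelian Dedekind groups exhibits a direct factor $\bar S\cong Q_8$ of $G/K$; in particular $\bar S\trianglelefteq G/K$. Let $S\le G$ be the full preimage of $\bar S$. Then $S\trianglelefteq G$, $S/K\cong Q_8$, and $K\le Z(S)$ because $K\le Z(G)$; thus $1\to K\to S\to Q_8\to 1$ is a central extension.

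The heart of the argument — and the step I expect to be the main obstacle — is to recognize that this central extension is a \emph{stem} extension, i.e. $K\le S^{(1)}\cap Z(S)$, and to see that this is exactly where the monolith hypothesis is used. We already have $K\le Z(S)$. For $K\le S^{(1)}$: the subgroup $S^{(1)}$ is characteristic in $S$, hence normal in $G$, and it is nontrivial since $S/K\cong Q_8$ is nonabelian. Its image in $S/K\cong Q_8$ is $(S/K)^{(1)}=Q_8^{(1)}\cong\Z_2$, so $S^{(1)}/(S^{(1)}\cap K)\cong\Z_2$. If $S^{(1)}\cap K=\{1\}$, then $S^{(1)}\cong\Z_2$ is a normal subgroup of $G$ of prime order, hence a minimal normal subgroup, hence equal to the monolith $K$ — contradicting $S^{(1)}\cap K=\{1\}$ and $K\neq\{1\}$. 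Therefore $S^{(1)}\cap K\neq\{1\}$, and since $K\cong\Z_p$ is simple, $K\le S^{(1)}$. Thus $S$ is a stem extension of $Q_8$, so $K$ is an epimorphic image of the Schur multiplier $M(Q_8)$. Since $M(Q_8)=\{1\}$, we get $K=\{1\}$, a contradiction. Hence $G$ is JNA, and combined with the first paragraph $G$ is a solvable JNA-group.

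A remark on where the difficulty lies: all the steps except the stem‑extension observation are routine. The only delicate move is the choice of the subgroup on which to test the monolith hypothesis — namely the preimage $S$ of a \emph{quaternion direct factor} of $G/K$ rather than all of $G/K$ — because for that choice the argument that $K\le S^{(1)}$ goes through cleanly (using only that $Q_8^{(1)}\cong\Z_2$ and that $K$ is the monolith), and the multiplier input needed is just the classical fact $M(Q_8)=\{1\}$. Note that $p$ never has to be split into cases; the same argument covers $p=2$ and $p$ odd simultaneously.
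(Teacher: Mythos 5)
Your proof is correct, and it takes a genuinely different route from the paper's at the crucial step. The paper first shows $p=2$ by an explicit commutator computation in $G^{(1)}=\langle x\rangle K$, then proves $G$ has no elements of odd prime order and no elements of infinite order, concludes $G$ is a $2$-group, and finally derives the contradiction by exhibiting a subgroup $H$ of order $16$ with $K\subseteq H$ and $H/K\cong Q_8$ and appealing to Burnside's classification of groups of order $16$. You bypass the entire reduction to a $2$-group: you pull back the $Q_8$ direct factor of the nonabelian Dedekind quotient $G/K$, use the monolith hypothesis to force $K\le S^{(1)}\cap Z(S)$ (your argument here is sound: if $S^{(1)}\cap K=\{1\}$ then $S^{(1)}\cong\Z_2$ would be a nontrivial normal subgroup of $G$ avoiding the monolith), and then kill $K$ by the triviality of the Schur multiplier $M(Q_8)$ via the five-term exact sequence. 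Both proofs ultimately rest on the same obstruction --- there is no stem extension of $Q_8$ by a nontrivial cyclic group --- but yours treats all primes $p$ uniformly, needs no case analysis or finiteness of $G$, and replaces the citation to Burnside's table by a standard homological fact; the paper's version, by contrast, yields the extra structural information that such a $G$ would have to be a $2$-group before reaching the contradiction. The only thing I would add is an explicit reference for the fact that in a stem extension $1\to K\to S\to Q\to 1$ the kernel $K$ is an epimorphic image of $M(Q)$ (the five-term homology sequence for central extensions), since that is the one nontrivial external input you use.
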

\begin{proof}
Suppose that $G$ is JND but not JNA.
 By Corollary \ref{d2}, $G$ is monolithic. 
 Let $K$ denote the monolith of $G$. 
 Since every subgroup of $Z(G)$ is normal subgroup of $G$, $K$ is central subgroup of order $p$ for some prime $p$. 

\n We claim that $p=2$.
 Since $G$ is JND but not JNA, $G/K$ is nonabelian Dedekind.
  By the structure Theorem for nonabelian Dedekind groups 
  \cite[5.3.7, p.143]{rob}, the commutator $(G/K)^{(1)}=G^{(1)}/K$ is of order 2. 
  Thus $|G^{(1)}|=2p$. 
  Let $x$ be an element of $G^{(1)}$ of order 2. 
  If $x\in Z(G)$, then $K=\langle x \rangle $, so $p=2$. 
  Assume that $x \not \in Z(G)$. Since $|G^{(1)}/K|=2$ and $x\not \in K$, so $G^{(1)}=\langle x \rangle K$. Let $g\in G$ such that $gxg^{-1} \neq x$. Then there exists a nontrivial element $h\in K$ such that $gxg^{-1}=xh$. Now since $h \in Z(G)$, $h^2=x^2h^2=(xh)^2=(gxg^{-1})^2=1$ implies that $p=2$. 

Next, we show that $G$ does not contain an element of odd prime order. Assume that $x\in G$ is of odd prime order $q$.  Since $\langle x \rangle K$ has a unique subgroup of order $q$ and  $\langle x \rangle K \trianglelefteq G$ (for $G/K$ is Dedekind), $\langle x \rangle  \trianglelefteq G$. But, then $K\subseteq \langle x \rangle $, a contradiction. 

Further, since $G/K$ is a nonabelian Dedekind, by \cite[5.3.7, p.143]{rob}, $G$ does not contain any element of infinite order. Thus we have shown that $G$ is a $2$-group. Lastly, since $G/K$ is a nonabelian Dedekind, by \cite[5.3.7, p.143]{rob}, $G$ contains a nonabelian subgroup $H$ of order 16 such that $K \subseteq H$ and $H/K \cong Q_8$. But this is not possible \cite[118, p.146]{burn}.    
\end{proof}
\smallskip
\begin{lemma} \label{ps}
A finite centerless solvable JND-group is a JNT-group.
\end{lemma}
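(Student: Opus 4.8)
The plan is to verify the two halves of the definition of JNT directly: that every proper homomorphic image of $G$ is a $T$-group, and that $G$ itself is \emph{not} a $T$-group. The first half is immediate and needs only one observation: in a Dedekind group every subgroup is normal, so a fortiori every subnormal subgroup is normal, i.e. every Dedekind group is a $T$-group. Since $G$ is JND, each of its proper homomorphic images is Dedekind, hence a $T$-group. Thus the entire content of the lemma is the assertion that $G$ is not a $T$-group, and this is what the proof must establish.

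Before attacking that, I would isolate the relevant case. By Proposition \ref{d3} a JND-group with nontrivial centre is a solvable JNA-group, and solvable JNA-groups are already classified in \cite{new1,new2}; so the JND-groups that remain, and to which this lemma is meant to apply, are exactly those that are not JNA, and by the same Proposition these are automatically centerless. (This use of the ``not JNA'' hypothesis is genuinely necessary: a centerless solvable JNA-group such as $S_3$ is itself a $T$-group, hence \emph{not} JNT.) Assuming $G$ is not JNA, I can pin down the top of $G$: by Corollary \ref{d2} $G$ is monolithic with monolith $K$, and since $G$ is not JNA the quotient $G/K$ is nonabelian Dedekind. The structure theorem for nonabelian Dedekind groups \cite[5.3.7]{rob} then forces $(G/K)^{(1)}=G^{(1)}/K$ to have order $2$, while $K$, being a minimal normal subgroup of a solvable group, is an elementary abelian $q$-group. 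In particular $K\leq G^{(1)}$ and $|G^{(1)}|=2|K|$.

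To show $G$ is not a $T$-group I would argue by contradiction, assuming $G$ is a $T$-group and producing a clash with the monolith. First, every subgroup $A\leq K$ satisfies $A\trianglelefteq K\trianglelefteq G$, so $A$ is subnormal and hence, by the $T$-property, normal in $G$; as $K$ is a minimal normal subgroup this leaves $K$ no proper nontrivial subgroup, i.e. $K\cong\Z_q$. Next I would invoke the structure theory of finite soluble $T$-groups \cite{rob}, by which $G^{(1)}$ is abelian; so $G^{(1)}$ is an abelian group of order $2q$. Now the two possibilities for $q$ both collapse. If $q=2$ then $K\cong\Z_2$ has trivial automorphism group, so $K$ is central, contradicting that $G$ is centerless. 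If $q$ is odd then $G^{(1)}$ has a unique, hence characteristic, subgroup of order $2$, which is therefore normal in $G$; being of order $2$ it cannot contain the monolith $K\cong\Z_q$, yet every nontrivial normal subgroup must contain the monolith. This contradiction shows $G$ is not a $T$-group and completes the proof.

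The delicate half is of course the demonstration that $G$ fails to be a $T$-group; the quotient half is essentially free. Within that, the load-bearing steps are two: the reduction, via the ``not JNA'' hypothesis and \cite[5.3.7]{rob}, to the configuration $K\leq G^{(1)}$ with $G^{(1)}/K$ of order $2$; and the appeal to the metabelianness of finite soluble $T$-groups to force $G^{(1)}$ abelian, after which the arithmetic mismatch between the even order of $G^{(1)}$ and the single odd prime of the monolith finishes. The main obstacle I anticipate is the bookkeeping around the monolith: one must be sure that forcing $K$ to be of prime order really does conflict with the structure of $G^{(1)}$, and it is precisely the centerless hypothesis (killing $q=2$) together with the ``not JNA'' hypothesis (forcing $2\mid|G^{(1)}|$) that close the two escape routes.
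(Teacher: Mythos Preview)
Your argument is correct and follows essentially the same route as the paper's: reduce to showing $G$ is not a $T$-group, assume it is, use that $G/K$ is nonabelian Dedekind to get $|G^{(1)}/K|=2$, invoke the metabelianness of finite soluble $T$-groups to make $G^{(1)}$ abelian of order $2q$ with $K\cong\Z_q$, and then split on $q=2$ versus $q$ odd to contradict either the centerless hypothesis or the minimality of $K$. The paper compresses your case split into the single line ``since $G^{(1)}$ is abelian of order $2p$ and $G$ is a $T$-group, $p=2$'' and obtains $|K|=p$ by citing the structure of finite soluble $T$-groups rather than by your direct subnormality argument, but these are cosmetic differences.

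Your additional observation is worth keeping: the paper's proof silently uses that $G/K$ is \emph{nonabelian} Dedekind, which requires $G$ not to be JNA, and indeed the lemma as literally stated fails for centerless solvable JNA-groups such as $S_3$ (which is already a $T$-group). The paper only applies the lemma inside Lemma~\ref{ps1}, where the not-JNA hypothesis is in force, so no downstream damage occurs, but you are right that the hypothesis is genuinely load-bearing.
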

\begin{proof}
Let $G$ be a finite centerless solvable JND-group. Since a Dedekind group is also a T-group, it is sufficient to show that $G$ is not a $T$-group.

Suppose that $G$ is a $T$-group. Let $K$ denote the monolith of $G$ (Corollary \ref{d2}). Since $G$ is a finite solvable $T$-group, $K$ is a cyclic group of order $p$ for some prime $p$. Since $G/K$ is nonabelian Dedekind group, by \cite[5.3.7, p.143]{rob}$, |G^{(1)}/K|=2$. Further, since a solvable $T$-group is of derived length at most two \cite[13.4.2, p.403]{rob}, $G^{(1)}$ is abelian. Now since $G^{(1)}$ is an abelian group of order $2p$ and $G$ is a $T$-group, $p=2$. But then $K \subseteq Z(G)=\{1\}$. This is a contradiction. Therefore $G$ is a JNT-group.
\end{proof}

\smallskip
The following example shows that there exists a solvable JND-group which is not a JNA-group.
\smallskip
\begin{example}
Consider an elementary abelian $3$-group $A$ of order $9$. Let $\psi$ denote the homomorphism from $Q_8$  to $Aut ~A=Gl_2(3)$ defined as $\imath \mapsto \left(\begin{array}{cc}
0 & 1 \\
-1 & 0
\end{array}\right)
$, $\jmath  \mapsto \left(\begin{array}{cc}
1 & 1 \\
1 & -1
\end{array}\right)$, where $Q_8= \{\pm 1, ~\pm\imath, ~\pm \jmath,~ \pm k \}$ is the quaternion group of order $8$. It is easy to check that $\psi$ is injective. 
Let $G=AQ$ denote the natural semidirect product of $A$ by $Q_8$. 
Then $G$ is a JND-group with monolith $A$.
\end{example}
\smallskip
The following proposition classifies all finite solvable JND-groups which are not JNA-groups.
\smallskip
\begin{lemma}\label{ps1}
A finite solvable group $G$ is JND but not JNA if and only if there exists an elementary abelian normal $p$-subgroup $A$ of $G$ for some prime $p$ which is also monolith of $G$ and a nonabelian Dedekind group $X$ of $G$ such that $A \cap X=\{1\},~ G=AX$ and the conjugation action of $X$ on $A$ is faithful and irreducible. 
\end{lemma}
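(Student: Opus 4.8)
The plan is to prove both directions by combining the structural facts already established. For the "only if" direction, let $G$ be a finite solvable JND-group that is not JNA. By Proposition \ref{d3}, $G$ is centerless, and by Corollary \ref{d2} it has a monolith $K$. Since $G/K$ is a nonabelian Dedekind group (because $G$ is not JNA), the classification of nonabelian Dedekind groups gives that $G^{(1)}/K$ has order $2$. I would first argue that $K$ is an elementary abelian $p$-group for some prime $p$: being the monolith, $K$ is a minimal normal subgroup of a solvable group, hence elementary abelian. Set $A=K$. The main work is to produce the complement $X$: I would apply the Schur--Zassenhaus theorem or, more carefully, a cohomological splitting argument. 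The key point is that $G/A$ is a nonabelian Dedekind group, so it has a subgroup $\bar X \cong Q_8$ (the quaternion part), and one needs the preimage of a suitable complement to split over $A$. Since $\gcd(|A|,|G/A|)$ need not be $1$ in general, I expect the splitting to be the delicate step; however, since $A$ is the \emph{unique} minimal normal subgroup and $G$ is centerless, any complement question reduces to showing $H^2(G/A,A)$-type obstructions vanish, which follows because $A$ is a faithful irreducible module (a nontrivial extension would create a second minimal normal subgroup or a central element). Once $X$ exists with $G=AX$, $A\cap X=\{1\}$, faithfulness of the action of $X$ on $A$ follows from $C_X(A)\trianglelefteq G$ intersecting $A$ trivially (so it would have to be trivial, else it contains $K=A$), and irreducibility follows from minimality of $A$ as a normal subgroup together with $A\cap X=\{1\}$. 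That $X$ is a nonabelian Dedekind group is immediate since $X\cong G/A$.

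For the "if" direction, suppose $G=AX$ with $A$ elementary abelian normal, $A\cap X=\{1\}$, $X$ nonabelian Dedekind, and the conjugation action of $X$ on $A$ faithful and irreducible. First I would check $G$ is not Dedekind: since the action of $X$ on $A$ is faithful and $X$ is nonabelian, $X$ does not act trivially, so some subgroup of $A$ (indeed a proper one, if $\dim A\ge 2$, or $A$ itself viewed via a subgroup of $X$ not centralizing it) fails to be normal in $G$; more directly, faithfulness means $X$ is not contained in $C_G(A)$, and since $A$ is abelian one readily exhibits a non-normal subgroup, for instance a non-normal subgroup of a point stabilizer, or uses that a Dedekind group has all subgroups normal which would force the action to be trivial on the lattice of subgroups of $A$. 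Then I would show every proper homomorphic image $G/N$ is Dedekind. If $N\ne\{1\}$ then $N\supseteq A$ since $A$ is the monolith (which I must verify: $A$ is minimal normal because it is irreducible under $X$, and any nontrivial normal subgroup meets $A$ nontrivially hence contains $A$ — here faithfulness and irreducibility are used to rule out normal subgroups avoiding $A$). Hence $G/N$ is a quotient of $G/A\cong X$, which is Dedekind, so $G/N$ is Dedekind. Therefore $G$ is JND. Finally $G$ is not JNA because $G/A\cong X$ is a nonabelian proper quotient.

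The main obstacle I anticipate is the existence of the complement $X$ in the "only if" direction, since the orders of $A$ and $G/A$ may share the prime $2$ (the Dedekind quotient contributes $Q_8$, hence $2$'s). The resolution should exploit that $A$ is the unique minimal normal subgroup: if the extension $1\to A\to G\to G/A\to 1$ did not split, one would typically find the "diagonal" obstruction producing either a central element (contradicting $Z(G)=\{1\}$ from Proposition \ref{d3}) or a second minimal normal subgroup (contradicting monolithicity from Corollary \ref{d2}). Concretely, I would lift a $Q_8$ in $G/A$ to a subgroup $H$ of $G$; $H\cap A$ is normal in $H$ and the extension $1\to H\cap A\to H\to Q_8\to 1$ must be analyzed, using that $|Q_8|$-torsion behavior together with faithfulness forces $H\cap A=\{1\}$, giving $X=H$. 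The remaining verifications (faithful, irreducible, Dedekind) are then routine consequences of minimality of $A$ and $X\cong G/A$.
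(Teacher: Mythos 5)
Your ``if'' direction is essentially fine (and in fact you prove more than needed, since the hypothesis already declares $A$ to be the monolith), but the ``only if'' direction has a genuine gap at exactly the step you yourself flag as delicate: the existence of the complement $X$. Your proposed resolution --- that the relevant $H^2$-type obstruction vanishes ``because $A$ is a faithful irreducible module,'' since a non-split extension ``would create a second minimal normal subgroup or a central element'' --- is not a valid argument. Non-split extensions of a group by a faithful irreducible module exist in abundance, and such an extension need not produce either a central element or a second minimal normal subgroup; faithfulness and irreducibility of the module say nothing about $H^2$. Schur--Zassenhaus is unavailable precisely because you have not yet shown $p\nmid|G/A|$ (the coprimality in Proposition \ref{c1}(ii) is derived \emph{after} the decomposition $G=AX$ is in hand, via the class equation for the action of $X$ on $A$, so you cannot invoke it here without circularity). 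Your fallback of lifting only a copy of $Q_8$ from $G/A$ is also insufficient, since a complement must cover all of $G/A\cong Q_8\times(\Z_2)^t\times F$, and the assertion that ``$|Q_8|$-torsion behavior together with faithfulness forces $H\cap A=\{1\}$'' is not substantiated.

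The paper obtains the splitting by an entirely different route: Lemma \ref{ps} shows a finite centerless solvable JND-group is a JNT-group, and the proof then splits into two cases according to whether $G^{(1)}$ is abelian. When $G^{(1)}$ is abelian one is forced into $p=2$ (else $Z(G)\neq 1$, contradicting Proposition \ref{d3}), and this case is eliminated by appealing to Robinson's classification of finite JNT-groups (Cases 6.2, 6.211, 6.212, 6.22, 6.222 of \cite{rob73}), which shows no such group exists. When $G^{(1)}$ is nonabelian, Case 6.1 of \cite{rob73} hands you the decomposition $G=AX$ with $A\cap X=\{1\}$ and the action faithful and irreducible, whence $K=A$ and $X\cong G/K$ is nonabelian Dedekind. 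So the hard content of the lemma is outsourced to Robinson's structure theory; your proposal replaces it with a cohomological vanishing claim that is false as stated, and it also never confronts the problematic case $p=2$ at all. To repair your argument you would need either to prove directly that $p$ is odd and coprime to $|G/K|$ (after which Schur--Zassenhaus does the splitting), or to reproduce the relevant parts of Robinson's analysis.
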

\begin{proof}
Suppose that $G$ is a finite solvable JND-group but not JNA-group. By Corollary \ref{d2}, $G$ is monolithic. Let $K$ be the monolith of $G$. Then $G/K$ is a nonabelian Dedekind. Thus by 
\cite[5.3.7, p.143]{rob}, $|G^{(1)}/K|=2$. Since $K$ is characteristically simple and abelian, it is an elementary abelian $p$-group of order $p^n$ for some prime $p$ \cite[3.3.15 (ii), p.87]{rob}.

Assume that $G^{(1)}$ is abelian. If $p \neq 2$, then $G^{(1)}$ contain unique element of order $2$ and so $Z(G) \neq 1$. By Proposition \ref{d3}, this is a contradiction. Thus $p=2$.
Now by Proposition \ref{d3}, $G$ is not nilpotent and by Lemma \ref{ps}, $G$ is a JNT-group. So by Case 6.2 and its Subcases 6.211, 6.212, 6.22 and 6.222 in \cite[pp.202-208]{rob73}, there is no finite JNT-group which is not JNA and has a minimal normal subgroup isomorphic to an elementary abelian $2$-group.

Assume that $G^{(1)}$ is nonabelian. Then $[G^{(1)}, K] \neq 1$, for $|G^{(1)}/K|=2$. Now since $G$ is a finite nonnilpotent JNT-group and $[G^{(1)}, K] \neq 1$, by Case 6.1
of \cite[p.202]{rob73}, there is a nontrival normal subgroup $A$ of $G$, a solvable $T$-subgroup $X$ of $G$ such that $A \cap X=\{1\}, ~G=AX$ and the conjugation action of $X$ on $A$ is faithful and irreducible. Further, since $K \subseteq A$ and the conjugation action of $X$ on $A$ is irreducible, $K = A$. So $X\cong G/A = G/K$ is a nonabelian Dedekind group.  

Conversely, suppose that $G=AX$, $A\cap X=\{1\}$, $X$ is a nonabelian Dedekind subgroup, $A$ is an elementary abelian $p$-group and also the monolith of $G$. Since $A$ is solvable and $G/A \cong X$ is nonabelian Dedekind and so solvable,  $G$ is solvable. Further, since $A$ is the monolith of $G$ and $G/A\cong X$ is nonabelian Dedekind, $G$ is JND but not JNA.
\end{proof}
\smallskip
The following proposition lists some more properties of finite solvable JND-groups which are not JNA-groups.
\smallskip
\begin{proposition}\label{c1}
Let $G$, $A$ and $X$ be as in the Lemma \ref{ps1}. Then

\noindent (i) The stabilizer of any nontrival element of $A$ is trival.

\noindent (ii) $|X|$ divides $p^n-1$, in particular $p$ and $|X|$ are coprime.

\noindent (iii) $X \cong Q_8 \times A_o$, where $A_o$ is a cyclic group of odd order.  

\end{proposition}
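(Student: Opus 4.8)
The plan is to exploit the faithful irreducible action of $X$ on the elementary abelian $p$-group $A \cong \F_p^n$ together with the very rigid structure of nonabelian Dedekind groups. For part (i), I would argue that the stabilizer $S = X_a$ of a nontrivial $a \in A$ is a subgroup of $X$; since $X$ is Dedekind, $S \trianglelefteq X$, hence $AS \trianglelefteq G$. If $S \neq \{1\}$, then $AS$ is a proper normal subgroup of $G$ strictly between $A$ and $G$ unless $S = X$; but $S = X$ would mean $a$ is fixed by all of $X$, so $\langle a \rangle \trianglelefteq G$ is a normal subgroup properly contained in the monolith $A$ (as $n \geq 1$ and $X$ acts irreducibly and faithfully, hence nontrivially, so $A$ is not central and $\dim A \geq 2$ when $X$ is nonabelian), a contradiction. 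For $\{1\} \neq S \subsetneq X$, I would use that $G/\langle a \rangle^{G}$-type quotients are Dedekind: more cleanly, since $G$ is JND and $AS$ is a proper nontrivial normal subgroup, $G/AS$ is Dedekind, and one derives a contradiction with $X$ acting faithfully by examining how $S$ fails to be normal-complemented; alternatively the cleanest route is Clifford theory — $A$ restricted to $S$ decomposes, the fixed space $A^S \neq 0$ is an $X$-submodule (since $S \trianglelefteq X$), so by irreducibility $A^S = A$, forcing $S = \{1\}$.

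For part (ii), with stabilizers trivial, $X$ acts semiregularly on the $p^n - 1$ nonidentity elements of $A$, so each $X$-orbit has size $|X|$, giving $|X| \mid p^n - 1$; in particular $\gcd(p, |X|) = 1$. I would present (i) and (ii) essentially together, since (ii) is an immediate orbit-counting consequence of (i).

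For part (iii), I would start from the classification of nonabelian Dedekind groups quoted in the introduction: $X \cong Q_8 \times E \times B$ where $E$ is elementary abelian $2$-group and $B$ is abelian of odd order. By part (ii), $|X|$ is coprime to $p$. The key extra constraint is that $X$ embeds in $GL_n(p)$ acting \emph{irreducibly} with \emph{trivial point stabilizers} (i.e. $X$ acts freely on $A \setminus \{0\}$, making $A \setminus\{0\}$ a union of regular $X$-orbits). A finite group acting freely on a nonzero vector space over $\F_p$ has periodic cohomology / is a "Frobenius complement"-type group: in particular all its abelian subgroups are cyclic, and its Sylow $2$-subgroup is cyclic or generalized quaternion. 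Since $Q_8 \leq X$, the Sylow $2$-subgroup is non-cyclic, hence generalized quaternion; but $Q_8 \times E$ is a subgroup and is generalized quaternion only if $E = \{1\}$. Also every abelian subgroup being cyclic forces $B$ cyclic. Hence $X \cong Q_8 \times B$ with $B$ cyclic of odd order, which is the claim with $A_o = B$.

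The main obstacle will be part (iii): one must invoke (or reprove) the fact that a group acting fixed-point-freely on a vector space has cyclic or quaternion Sylow $2$-subgroups and cyclic abelian subgroups. If the paper wishes to stay self-contained, the honest work is in showing directly that an elementary abelian subgroup $\Z_2 \times \Z_2$ cannot act freely on $\F_p^n$ — equivalently that in such an action some nonidentity element has an eigenvalue $1$ — which follows from the fact that a faithful free action of $\Z_2\times\Z_2$ would require a common fixed-point-free representation, impossible since the three involutions cannot all act without eigenvalue $1$ while multiplying to the identity on each $1$-dimensional summand. Similarly one rules out $\Z_q \times \Z_q$ for odd $q$. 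Parts (i) and (ii) I expect to be routine given Lemma \ref{ps1} and the Dedekind structure theorem.
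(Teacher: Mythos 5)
Your proposal is correct, but it diverges from the paper's proof in parts (i) and (iii), so a comparison is worthwhile. For (i), the paper also exploits normality coming from the Dedekind hypothesis, but differently: it takes $1\neq x\in \mathrm{stab}_X(a)$, notes $\langle x\rangle A\trianglelefteq G$ because $G/A$ is Dedekind, observes that $a\in Z(\langle x\rangle A)$, so this center is a nontrivial normal subgroup of $G$ and hence contains the monolith $A$, contradicting faithfulness. Your Clifford-style version --- $S=\mathrm{stab}_X(a)\trianglelefteq X$ since $X$ is Dedekind, so the fixed space $A^S\ni a$ is an $X$-submodule, whence $A^S=A$ by irreducibility and $S=\{1\}$ by faithfulness --- is complete and arguably cleaner; the preliminary casework about $AS$ in your first paragraph is muddled and can simply be deleted in favour of it. Part (ii) is identical to the paper's orbit count. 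For (iii) the paper does not invoke Frobenius-complement theory at all: it cites Lemma 1 of Robinson's JNT paper, which produces a field extension $E$ of $\Z_p$ with $Z(X)$ embedded in $E^{\star}$, so $Z(X)$ is cyclic; writing $X\cong Q_8\times V\times B$ with $V$ elementary abelian of exponent $2$ and $B$ abelian of odd order, the center is $\Z_2\times V\times B$, and cyclicity forces $V=\{1\}$ and $B$ cyclic. Your route --- $X$ acts freely on $A\setminus\{0\}$, hence is a Frobenius complement with generalized quaternion Sylow $2$-subgroup and all abelian subgroups cyclic --- reaches the same conclusion and is sound, but, as you yourself flag, it rests on the fixed-point-free structure theory, which must be cited or reproved; the $2$-part is elementary (an involution on $\F_p^n$ with $p$ odd, which holds by (ii), and with no nonzero fixed vector must be $-I$, so two distinct commuting involutions cannot both act freely), while excluding $\Z_q\times\Z_q$ for odd $q$ needs the coprime-action generation theorem. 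The paper's citation makes for a shorter proof; your argument buys a stronger structural fact and is more self-contained in spirit once the free-action lemma is supplied.
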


\begin{proof}
Let $a \in A,~ a \neq 1$. Assume that the stabilizer $stab_X(a)$ of $a$ in $X$ is nontrival. Assume that $x \in stab_X(a),~x \neq 1$. Since $G/A$ is a Dedekind group, $\langle x \rangle A \trianglelefteq G$. Thus $Z(\langle x \rangle A)$ is a nontrival normal subgroup of $G$ (for $a \in Z(\langle x \rangle A))$ and so $A \subseteq Z(\langle x \rangle A)$, for $A$ is the monolith of $G$. But this is a conradiction, for the conjugation action of $X$ on $A$ is faithful. This proves (i). Now (ii) is implied by the class equation for the action of $X$ on $A$.

Further, by \cite[Lemma 1, p.185]{rob73}, there is an extension field $E$ of $\Z_p$ such that $Z(X) \cong Y \leq E^{\star}$ and $E=\Z_p(Y)$, where $E^{\star}$ denote the multiplicative group of $E$. Clearly $E$ is a finite field , so $E^{\star}$ is a cyclic group. This implies $X\cong Q_8 \times A_o$, where $A_o$ is a cyclic group of odd order \cite[5.3.7, p.143]{rob}. This proves (iii).  \end{proof}

\section{Finite nonsolvable JND-groups}

Recall that a group is {\em{semisimple}} \cite[p.89]{rob} if its maximal solvable normal subgroup is trivial. Also a maximal normal completely reducible subgroup is called the {\em{CR-radical}} \cite[p.89]{rob}.
\smallskip
\begin{proposition}\label{d4}
Let $G$ be a finite nonsolvable JND-group. 
Then $G$ is a semisimple group.
\end{proposition}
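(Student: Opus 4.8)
The plan is to show that the maximal solvable normal subgroup $R$ (the solvable radical) of $G$ is trivial, using the monolithic structure established in Corollary~\ref{d2}. Since $G$ is nonsolvable, it is certainly not JNA, so by Corollary~\ref{d2} it has a monolith $K$, and $G/K$ is a nonabelian Dedekind group (if $G/K$ were abelian then $G^{(1)}\subseteq K$ would be the monolith and $G/G^{(1)}$ abelian would force $G$ solvable — so in fact $G/K$ is nonabelian Dedekind, hence $|G^{(1)}/K|=2$ by \cite[5.3.7, p.143]{rob}). In particular $G^{(1)}$ has a subgroup of index $2$ containing $K$, and $G^{(1)}$ is nonsolvable (else $G$ would be solvable).

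First I would argue that $R\subseteq K$ is impossible unless $R=\{1\}$: since $K$ is the \emph{smallest} nontrivial normal subgroup, either $R=\{1\}$ or $K\subseteq R$. So assume $K\subseteq R$, i.e.\ $R\neq\{1\}$; I must derive a contradiction. The key observation is that $G/K$ is Dedekind, hence \emph{metabelian} (every Dedekind group is nilpotent of class $\le 2$), so $(G/K)^{(2)}=\{1\}$, i.e.\ $G^{(2)}\subseteq K$. But $K\subseteq R$ and $R$ is solvable, so $G^{(2)}\subseteq K\subseteq R$ gives $G^{(2)}$ solvable; combined with $G/G^{(2)}$ being metabelian (as $(G/K)$ is and $G^{(2)}\subseteq K$... actually $G/G^{(2)}$ is metabelian by definition of $G^{(2)}$), this forces $G$ itself solvable — contradiction. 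Hence $R=\{1\}$, so the maximal solvable normal subgroup is trivial.

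Wait — I should double-check the logical gap: $G^{(2)}\subseteq K\subseteq R$ with $R$ solvable of derived length $d$ gives $G^{(d+2)}\subseteq R^{(d)}=\{1\}$, so $G$ is solvable of derived length $\le d+2$. That is the contradiction, and it only used that $K\subseteq R$ with $R$ solvable together with $G^{(2)}\subseteq K$. The latter inclusion is exactly Corollary~\ref{da} (or its proof): since $G$ is monolithic with monolith $K$ and $G/K$ is nonabelian Dedekind, $(G/K)^{(1)}$ has order $2$ so $(G/K)^{(2)}=\{1\}$, i.e.\ $G^{(2)}\subseteq K$. So the whole argument reduces to: monolith $K$ is solvable-or-not; if $G$ has any nontrivial solvable normal subgroup then $K$ lies inside it, hence $K$ is solvable, hence $G^{(2)}\subseteq K$ is solvable, hence $G$ is solvable — against hypothesis.

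The main obstacle — really the only subtlety — is making sure $G/K$ is genuinely \emph{nonabelian} Dedekind (so that $(G/K)^{(2)}=\{1\}$ is the relevant fact, and more importantly that we are in the monolithic setting of Corollary~\ref{d2} rather than the JNA case): if $G$ were JNA then $G/G^{(1)}$ abelian and $G^{(1)}=K$ forces... no, a nonsolvable JNA group has $G^{(1)}=K$ the monolith with $G/G^{(1)}$ abelian, which is absurd since then $G$ would be solvable iff $G^{(1)}$ is, and $G^{(1)}$ being the monolith is characteristically simple — a nonabelian characteristically simple group is not solvable, so no contradiction yet, but then $G$ is nonsolvable with $G/G^{(1)}$ abelian, fine, and $G^{(2)}=G^{(1)}=K$, so again $K$ solvable $\Rightarrow$ $G$ solvable. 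So the same one-line argument ($R\neq 1\Rightarrow K\subseteq R\Rightarrow K$ solvable $\Rightarrow G^{(2)}\subseteq K$ solvable $\Rightarrow G$ solvable) covers both the JNA and non-JNA cases uniformly, and I would present it that way.
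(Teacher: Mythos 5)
Your proof is correct and rests on the same key fact as the paper's, namely that a Dedekind group is solvable (indeed metabelian), so a nontrivial solvable normal subgroup would force $G$ itself to be solvable. The paper's own argument is more direct and skips the detour through the monolith entirely: for any nontrivial normal solvable subgroup $N$, the quotient $G/N$ is Dedekind, hence solvable by the structure theorem, hence $G$ is solvable --- a contradiction; your extra steps (locating $K\subseteq N$ and showing $G^{(2)}\subseteq K$) are valid but unnecessary.
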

\begin{proof}
Assume that $G$ has a nontrival normal solvable subgroup $N$. Then $G/N$ is a Dedekind group. Hence by \cite[5.3.7, p.143]{rob}, $G/N$ is solvable. But then $G$ is solvable, a contradiction. 
\end{proof}
\smallskip
  
Now we fix some notations for the rest of the section.
For a group $G$, we 
denote {\em{Inn $G$}} for the inner automorphism subgroup 
of {\em{Aut $G$}}, the automorphism group of $G$ and {\em{Out $G$}} 
for the outer automorphism group of $G$. Let $H$ denote a finite nonabelian simple group. 
Consider the semidirect product $\underbrace{(Aut~H \times \ldots \times Aut~H)}_{r~copies}\rtimes S_r$ and $\underbrace{(Out~H \times \ldots \times Out~H)}_{r~copies} \rtimes S_r$, where $S_r$ acts on $\underbrace{(Aut~H \times \ldots \times Aut~H)}_{r~copies}$ as well as on $\underbrace{(Out~H \times \ldots \times Out~H)}_{r~copies}$ by permuting the coordinates. Let  
$$ \tilde{\nu}: \underbrace{(Aut~H \times \ldots \times Aut~H)}_{r~copies}\rtimes S_r \longrightarrow \underbrace{(Out~H \times \ldots \times Out~H)}_{r~copies} \rtimes S_r$$ be the homomorphism 
  defined by $\tilde{\nu}(x_1, x_2, \ldots , x_r, x_{r+1})=(x_1Inn~H, \ldots , $ $x_rInn~H,x_{r+1})$. We denote by $\beta$ the projection of $\underbrace{(Out~H \times \ldots \times Out~H)}_{r~copies} \rtimes S_r$ onto the  $(r+1)$-th factor $S_r$, which is obviously a homomorphism.
 \smallskip
\begin{lemma} \label{ll}
Let $H$ be a finite nonabelian simple group. Then $Out~H$ does not contain a subgroup
 isomorphic to the quaternion group $Q_8$ of order $8$.
\end{lemma}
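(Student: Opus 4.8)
The proof I have in mind runs through the classification of the finite simple groups (CFSG) together with the description of the automorphism groups it supplies. Since $Q_8$ is a $2$-group, an embedding $Q_8\le Out~H$ would put a copy of $Q_8$ into a Sylow $2$-subgroup of $Out~H$, so it is enough to show that a Sylow $2$-subgroup of $Out~H$ contains no subgroup isomorphic to $Q_8$. It helps to recall how $Q_8$ is detected: it is the unique non-abelian group of order $8$ with a single involution, equivalently the smallest non-cyclic $2$-group of $2$-rank $1$; so if a Sylow $2$-subgroup of $Out~H$ is abelian, or if every non-cyclic $2$-subgroup of $Out~H$ has more than one involution, there is nothing to prove. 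For $H$ alternating this is immediate, as $Out~A_n\cong\Z_2$ for $n\ge5$, $n\ne6$, and $Out~A_6\cong\Z_2\times\Z_2$; for $H$ sporadic, $|Out~H|\le2$ by the ATLAS. Hence the whole content of the lemma lies in the groups of Lie type.

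So let $H$ be of Lie type with defining characteristic $p$ and $q=p^{f}$. I would use the standard structure theorem for $Out~H$: there is a normal series $1\trianglelefteq D\trianglelefteq N\trianglelefteq Out~H$, where $D$ (diagonal automorphisms modulo inner ones) is abelian of $2$-rank at most $2$, being cyclic except for type $D_n$ with $n$ even and $q$ odd where it is a Klein four-group; $N/D$ (field automorphisms) is cyclic; and $Out~H/N$ (graph automorphisms) is $1$, $\Z_2$, or $S_3$, the last occurring only for type $D_4$. Assume $Q:=Q_8\le Out~H$. As the $2$-part of $|Out~H/N|$ is at most $2$, we have $|Q\cap N|\ge4$, so $Q\cap N\cong\Z_4$ or $Q_8$. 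Now $Q\cap D$ is an abelian normal subgroup of $Q\cap N$ with cyclic quotient inside $N/D$, and the only abelian normal subgroups of $Q_8$ with cyclic quotient are its cyclic subgroups of order $4$; this yields two cases: either $Q\le N$ and $Q\cap D\cong\Z_4$, whence $D$ has an element of order $4$, is cyclic of order divisible by $4$, and $H$ is linear, unitary, or of type $D_n$ with $n$ odd; or $Q\not\le N$, $Q\cap N\cong\Z_4$, and $Q$ projects onto an involution of the graph group.

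The remaining step, which I expect to be the main obstacle, is to eliminate these surviving configurations family by family, i.e.\ to show that the defining relations of $Q_8$ ($x^{4}=1$, $y^{2}=x^{2}\ne1$, $yxy^{-1}=x^{-1}$) cannot be realised inside $Out~H$. In practice this comes down to checking that, in each Lie-type family, the action of the field automorphism on $D$ (a $p$-power map) and of the graph automorphism on $D$ (essentially inversion) combine so that a Sylow $2$-subgroup of $Out~H$ is always abelian of rank at most $2$, dihedral, or modular --- but never semidihedral or generalized quaternion, since those are exactly the $2$-groups containing $Q_8$. The families requiring the most care are $D_n$, because of the Klein-four or $\Z_4$ diagonal part and the triality group $S_3$ when $n=4$ (where one would use $D\rtimes S_3\cong S_4$ together with the fact that $D_8\times\Z_{2^{a}}$ has no subgroup isomorphic to $Q_8$), and the linear and unitary groups $\mathrm{PSL}_n(q)$, $\mathrm{PSU}_n(q)$ when a high power of $2$ divides $\gcd(n,q-1)$ (respectively $\gcd(n,q+1)$), where one must examine the metacyclic group $N$ (an extension of the cyclic group $D$ by the cyclic field group $N/D$) directly.
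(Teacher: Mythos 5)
Your overall strategy coincides with the paper's: dispose of the alternating and sporadic cases by the bound $|Out~H|\le 4$, and reduce the real content to the groups of Lie type via the standard normal series $1\trianglelefteq D\trianglelefteq N\trianglelefteq Out~H$ (diagonal, field, graph). The paper handles the Lie-type case by a single citation to the structure theorem \cite[Theorem 2.5.12, p.58]{gor}; you instead begin to extract the argument explicitly, and your reduction is sound as far as it goes (the observation that the only abelian normal subgroups of $Q_8$ with cyclic quotient are its $\Z_4$'s, hence either $Q\le N$ with $Q\cap D\cong\Z_4$, or $Q\cap N\cong\Z_4$ with $Q$ hitting a graph involution, is correct and genuinely narrows the problem to the linear, unitary and $D_n$ families).

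The gap is that the proof stops exactly where the work begins: the elimination of the two surviving configurations is only announced (``in practice this comes down to checking that\dots'', ``the families requiring the most care are\dots'') and never carried out. This is not a routine verification one can wave at --- for instance, $N$ is metacyclic and $Q_8$ is itself metacyclic, so nothing formal prevents $Q_8\le N$; one must actually use how the field automorphism acts on $Outdiag$ (e.g.\ for $\mathrm{PSL}_n(q)$, that a field automorphism inverting an element $x$ of order $4$ in $D$ forces any lift $y$ to satisfy $y^2=1$ rather than $y^2=x^2$, killing the quaternion relations). Until those computations are done for each family, the lemma is not proved. A secondary inaccuracy: your claim that semidihedral and generalized quaternion groups ``are exactly the $2$-groups containing $Q_8$'' is false ($Q_8\times\Z_2$ is a counterexample), so the intended criterion for ruling out $Q_8$ in a Sylow $2$-subgroup needs to be restated correctly before it can be applied. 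If you do not want to complete the case analysis yourself, the honest fix is to do what the paper does and cite the precise structural statement from Gorenstein--Lyons--Solomon from which the non-embedding follows.
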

\begin{proof}
If $H$ is isomorphic to either alternating group $Alt_n$ of degree $n$ or to a Sporadic simple group, then $|Out(H)| \leq 4$ (see \cite[2.17, 2.19, p.299]{suz} and \cite[Table 2.1C, p.20]{lie}), so the Lemma follows in this case. If $H$ is isomorphic to a finite simple group of Lie type, then the Lemma follows by \cite[Theorem 2.5.12, p.58]{gor}.       
\end{proof}
\smallskip
\begin{corollary} \label{l}
Let $H$ be a finite nonabelian simple group. Then for any $m \in \N$,
 ${\underbrace{Out~H \times \ldots \times Out~H}_{m ~copies}}$ does not contain a subgroup
 isomorphic to the quaternion group $Q_8$ of order $8$.
\end{corollary}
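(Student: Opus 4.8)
The plan is to deduce Corollary \ref{l} directly from Lemma \ref{ll} by an elementary argument about subgroups of direct products. The key structural fact I would use is that $Q_8$ is \emph{indecomposable} — it cannot be written as a nontrivial direct product — and, more to the point, that every proper quotient of $Q_8$ is abelian (indeed cyclic of order at most $2$). Suppose, for contradiction, that some copy of the direct power $D = \underbrace{Out~H \times \cdots \times Out~H}_{m~copies}$ contains a subgroup $Q$ isomorphic to $Q_8$. For each $i$ with $1 \le i \le m$, let $\pi_i \colon D \to Out~H$ be the $i$-th coordinate projection, and consider the restriction $\pi_i|_Q \colon Q \to Out~H$.

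The main step is to observe that each $\pi_i|_Q$ must be trivial. Indeed, $\pi_i(Q)$ is a quotient of $Q \cong Q_8$, hence is either all of $Q_8$ or abelian. If $\pi_i(Q) \cong Q_8$ for some $i$, then $Out~H$ would contain a subgroup isomorphic to $Q_8$, contradicting Lemma \ref{ll}. Therefore $\pi_i(Q)$ is abelian for every $i$, which forces $Q$ to embed into the abelian group $\prod_{i=1}^{m} \pi_i(Q)$ via $q \mapsto (\pi_1(q), \ldots, \pi_m(q))$; but this map is injective (it is the restriction to $Q$ of the identity embedding $D \hookrightarrow D$), so $Q_8$ would be abelian, a contradiction. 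Hence no such $Q$ exists.

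I expect essentially no obstacle here: the only thing to be careful about is phrasing the ``indecomposability'' input correctly — it is not quite that $Q_8$ is directly indecomposable, but rather the sharper fact that $Q_8$ has no proper nonabelian quotient, equivalently $Q_8/Z(Q_8) \cong Z_2 \times Z_2$ is abelian and $Q_8$ itself is nonabelian. Once that is in place, the argument is the standard ``a nonabelian subgroup of a direct product must surject nonabelianly onto some factor'' observation. One could also phrase it via Goursat's lemma, but that is overkill. I would write the corollary's proof in two or three sentences along exactly these lines, citing Lemma \ref{ll} for the single-factor case.
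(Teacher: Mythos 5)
Your proof is correct and follows essentially the same route as the paper's: both arguments reduce to Lemma \ref{ll} by showing that some coordinate projection must carry the putative copy of $Q_8$ onto a subgroup of $Out~H$ isomorphic to $Q_8$. The only cosmetic difference is the witness used — the paper tracks an element of order $4$ and notes that any quotient of $Q_8$ containing an element of order $4$ is isomorphic to $Q_8$, while you note that all proper quotients of $Q_8$ are abelian and that a subgroup of a direct product with all projections abelian is abelian; these are interchangeable formulations of the same fact.
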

\smallskip

\begin{proof}
Assume ~that~
 $\alpha$ ~is~ an ~injective~ homomorphism ~from ~$Q_8$~ to ~
 ${\underbrace{Out~H \times \ldots \times Out~H}_{m ~copies}}$. 
  Let $u=(x_1,x_2, \ldots , x_m)$ denote an element of $\alpha (Q_8)$ of order $4$. 
 Then there is $t$ ($1\leq t\leq m$) such that 
 $x_t$ is of order $4$. Let $p_t$ denote the projection 
 of ${\underbrace{Out~H \times \ldots \times Out~H}_{m ~copies}}$
 onto the $t$-th factor. 
 Then $(p_t\circ \alpha)(Q_8)$ is a subgroup of $Out~H$ which contains an element of order $4$. 
 Since a homomorphic image of $Q_8$ containing an element of order $4$ is isomorphic to $Q_8$, 
 $ (p_t\circ \alpha)(Q_8)\cong Q_8$. By Lemma \ref{ll}, this is impossible. 
\end{proof} 
\smallskip
\begin{theorem}\label{th1}
 A finite nonsolvable group $G$ is JND-group if and only if there exists a finite nonabelian simple group $H$, a natural number $r$ and a Dedekind group $D \subseteq \underbrace{(Out~H \times \ldots \times Out~H)}_{r~copies} \rtimes S_r$ such that 

\n (i) the usual action of $\beta (D)$ on the set $\{1,2, \ldots ,r \}$ is free and transitive, 

\n and

\n (ii) $G \cong \tilde{\nu}^{-1}(D)$,

\n where all the notations have meaning described as after the Proposition \ref{d4} 

\n Further, $G$ is JND but not JNA if and only if $D$ is a nonabelian Dedekind group and $r$ is even.
\end{theorem}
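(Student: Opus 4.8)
The plan is to route everything through the monolith. For the ``only if'' direction I would begin with a finite nonsolvable JND-group $G$: by Proposition~\ref{d4} it is semisimple and by Corollary~\ref{d2} it is monolithic, say with monolith $K$. Since a minimal normal subgroup is characteristically simple and cannot here be abelian (an abelian one would be a nontrivial solvable normal subgroup of the semisimple group $G$), $K\cong \underbrace{H\times\cdots\times H}_{r}$ for some finite nonabelian simple group $H$ and some $r\geq 1$, both determined by $K$. Next I would observe that the centralizer $C_G(K)$ of $K$ in $G$ is trivial: it is normal, meets $K$ in $Z(K)=\{1\}$, and if nontrivial it would contain the minimal normal subgroup $K$ and force $K$ abelian. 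Hence conjugation embeds $G$ into $Aut(K)$; using the standard identification $Aut(H^r)\cong (Aut~H)^r\rtimes S_r$ (valid since $H$ is nonabelian simple), this embedding carries $K$ onto $\ker\tilde\nu$, the group of inner automorphisms of $K$. Thus $K\subseteq G$, and setting $D:=\tilde\nu(G)=G/K$ we get $G=\tilde\nu^{-1}(D)$, with $D$ Dedekind because $G$ is JND and $K\neq\{1\}$. Finally, the conjugation action of $G$ on the $r$ simple direct factors of $K$ is the action of $G$ on $\{1,\dots,r\}$ through $\beta\circ\tilde\nu$, with image $\beta(D)$; it is transitive since $K$ is a minimal normal subgroup, and the stabilizer of a factor contains $K$, hence is a subgroup of $G$ containing $K$, hence normal in $G$ (as $G/K$ is Dedekind), hence equal to each of its conjugates, i.e. to the kernel of the action. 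So $\beta(D)$ acts freely and transitively, which gives (i) and (ii).

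For the ``if'' direction, given $H$, $r$ and a Dedekind group $D\subseteq (Out~H)^r\rtimes S_r$ satisfying (i), I would put $K=\ker\tilde\nu$ (so $K\cong H^r$) and $G=\tilde\nu^{-1}(D)$. Then $K\subseteq G$, $G/K\cong D$ is Dedekind, and $G$ is nonsolvable since it contains $H^r$. The crucial step is to show that $K$ is the monolith of $G$. As above $C_G(K)=\{1\}$; the conjugation action of $G$ on the factors $H_1,\dots,H_r$ has image $\beta(D)$, transitive by (i); and since the normal subgroups of $H^r$ are exactly the $2^r$ subproducts $\prod_{i\in S}H_i$ (a standard fact for nonabelian simple $H$), the only $G$-invariant ones are $\{1\}$ and $K$, so $K$ is a minimal normal subgroup of $G$. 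If $\{1\}\neq M\trianglelefteq G$ then $[M,K]\subseteq M\cap K$; were $M\cap K=\{1\}$ we would get $M\subseteq C_G(K)=\{1\}$, impossible, so $M\cap K$ is a nontrivial $G$-invariant normal subgroup of $K$ and hence equals $K$, giving $K\subseteq M$. Therefore every proper quotient of $G$ is a quotient of $G/K\cong D$, hence Dedekind, while $G$ itself is not Dedekind (a proper nontrivial subgroup $L$ of $H_1\leq K$ is not normal in $G$: if $r\geq 2$ some element of $G$ conjugates $H_1$ onto another factor and so does not normalize $L$, and if $r=1$ then $L\trianglelefteq G$ would force $L\trianglelefteq Inn~H\cong H$). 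Hence $G$ is a nonsolvable JND-group.

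For the final assertion, $G$ is nonsolvable and so nonabelian, hence JNA precisely when its proper quotient $G/K\cong D$ is abelian; thus $G$ is JND but not JNA exactly when $D$ is a nonabelian Dedekind group. In that case $D$ contains a subgroup $Q\cong Q_8$. Put $D_0=\ker(\beta|_D)=D\cap (Out~H)^r$, a normal subgroup of $D$. By Corollary~\ref{l}, $Q\not\subseteq D_0$, so $Q\cap D_0$ is a proper normal subgroup of $Q_8$, of order $1$, $2$ or $4$; hence $\beta(Q)=Q/(Q\cap D_0)$ is a nontrivial $2$-group. Since $\beta(D)$ acts regularly on $\{1,\dots,r\}$ we have $|\beta(D)|=r$, and $\beta(Q)\leq\beta(D)$ forces $|\beta(Q)|$, and so $2$, to divide $r$; hence $r$ is even. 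Conversely, if $D$ is a nonabelian Dedekind group then $G=\tilde\nu^{-1}(D)$ is JND by the ``if'' direction and is not JNA because $G/K\cong D$ is nonabelian.

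I expect the real content to be concentrated in two places: the structural facts that $Aut(H^r)\cong (Aut~H)^r\rtimes S_r$ and that the normal subgroups of $H^r$ are just the $2^r$ subproducts (standard for nonabelian simple $H$, but exactly what is needed to pin down the monolith and to obtain the wreath-product picture), and the parity argument of the last paragraph, which is where Corollary~\ref{l}---and through Lemma~\ref{ll} the classification of finite simple groups---is genuinely used; this last step is the one I would expect to be the main obstacle. Everything else should be routine bookkeeping with monoliths, centralizers, and the observation that subgroups of $G$ containing $K$ are automatically normal.
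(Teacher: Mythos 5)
Your proposal is correct and follows the same overall architecture as the paper's proof (monolith $K\cong H^r$, embedding of $G$ into $(Aut~H)^r\rtimes S_r$ with $K$ identified with the inner automorphisms, $D=G/K$, transitivity of $\beta(D)$ from minimality of $K$, and Corollary \ref{l} for the parity of $r$), but several individual steps are handled by genuinely different arguments. The most notable is freeness: the paper writes $D=Q_8\times(\Z_2)^t\times A_o$ and argues that every element of $\beta(D)$ is either central or has a nontrivial central square, neither of which can fix a symbol in a transitive action; your argument --- the stabilizer of a factor contains $K$, hence is normal in $G$ because $G/K$ is Dedekind, hence all point stabilizers coincide with the kernel of the action --- is shorter and uses the Dedekind hypothesis more directly. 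Where the paper routes through Robinson's CR-radical machinery (first proving $K$ is the CR-radical, then citing 3.3.18) you work directly with $C_G(K)=\{1\}$ and $Aut(H^r)\cong(Aut~H)^r\rtimes S_r$; and where the paper combines a derived-series argument with Miller's theorem on normal subgroups of direct products, you use the equivalent standard fact that the normal subgroups of $H^r$ are the subproducts. One small imprecision: in the converse direction you assert $C_G(K)=\{1\}$ ``as above,'' but your forward argument for that fact used minimality of $K$, which is not yet available there; the correct justification is that $C_{Aut(K)}(Inn~K)$ is trivial because $K$ is centerless. You also explicitly check that $\tilde\nu^{-1}(D)$ is not Dedekind (the paper leaves this implicit; it also follows immediately from the solvability of Dedekind groups), and your parity argument at the end fills in the same details the paper compresses into one line.
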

\begin{proof}
Suppose that $G$ is a nonsolvable JND-group. By Corollary \ref{d2}, $G$ is monolithic. Let $K$ denote the monolith of $G$. Since $G$ is nonsolvable and $K$ is characteristically simple, by \cite[3.3.15 (ii), p.87]{rob}, there exists a finite nonabelian simple group $H$ and a natural number $r$ such that $K \cong \underbrace{(H \times \ldots \times H)}_{r~copies}$.

By Proposition \ref{d4}, $G$ is semisimple. We show that $K$ is the CR-radical of $G$. Let $N$ be the CR-radical of $G$ containing $K$. Then $N$ is semisimple \cite[Lemma, p.205]{kur}. Assume that $N \neq K$. Then there exists nontrival completely reducible normal subgroup $L$ of $N$ which is complement of $K$ in $N$. Now since $L \cong N/K$ and $G/K$ is a Dedekind group, $L$ is solvable \cite[5.3.7, p.143]{rob}. Further, since nontrival normal subgroup of a semisimple group is semisimple \cite[Lemma, p.205]{kur}, $L$ is also semisimple. This is a contradiction.

Now by \cite[3.3.18 (i), p.89]{rob}, there exists ~$G^{\star} ~\cong ~~G$ ~such ~that ~$\underbrace{(Inn~H \times \ldots \times Inn~H)}_{r~copies} \leq G^{\star} \leq \underbrace{(Aut~H \times \ldots \times Aut~H)}_{r~copies}\rtimes S_r$.
  We identify $G$ with $G^{\star}$ and $H$ with $Inn~H$. Thus $K$ is identified with $\underbrace{(Inn~H \times \ldots \times Inn~H)}_{r~copies}$. Take $D=G/K \subseteq \underbrace{(Out~H \times \ldots \times Out~H)}_{r~copies}\rtimes S_r$. Then $D$ is a Dedekind group and $G \cong \tilde{\nu}^{-1}(D)$. This proves (ii).
   
Next, we claim that $\beta(D)$ acts transitively on the set of symbols $\{1,2, \ldots ,$ $r\}$. Let $O$ denote an orbit of the natural action of $\beta(D)$ on $\{ 1,2, \ldots , r\}$. Consider the subgroup $M_O=\{(x_1,x_2, \ldots ,x_r,1)| x_i \in Inn~H ~\text{and}~ x_i=1 ~\text{if} ~i \not \in O \}$ of $\underbrace{(Aut~H \times \ldots \times Aut~H)}_{r~copies}\rtimes S_r$.
It is easy to observe that for each element of $G$, the $(r+1)$-th coordinate is an element of $\beta(D)$. This implies that $M_O$ is a normal subgroup of $G$ contained in $K$. But $K$ is the monolith of $G$, so $M_O=K$. This proves that $O=\{1,2,\ldots , r\}$.

Now, we show that the action of $Z(\beta (D))$ on $\{1,2, \ldots , r\}$ is free.  Suppose that an element $u$ of $Z(\beta (D))$ fixes a symbol $a$ under the natural action of $\beta(D)$ on $\{1,2, \ldots, r\}$. Clearly $u$ fixes each element of the orbit $\beta(D).a$ of $a$ which is $\{1,2, \ldots , r\}$. This implies $u=1$. So, no nontrivial element of $Z(\beta (D))$ will fix any symbol in $\{1,2, \ldots ,r\}$.

If $D$ is abelian, then $Z(\beta(D))=\beta(D)$ and so the action of $\beta(D)$ is free. If $D$ is nonabelian Dedekind group, then by the structure Theorem for Dedekind groups \cite[5.3.7, p.143]{rob}, there exists a nonnegative integer $t$ and an abelian group $A_{o}$ of odd order such that we can write $D = Q_8 \times (\Z_2)^t \times A_{o}$. Thus for any $x \in \beta(D)$, either $x\in Z(\beta (D))$ or $1 \neq x^2 \in Z(\beta (D))$. This implies that a noncentral element $x$ also does not fix any symbol of set $\{1,2, \ldots ,r\}$ (for then $1 \neq x^2 \in Z(\beta (D))$ will fix that symbol). Thus action of $\beta (D)$ on $\{1,2, \ldots ,r\}$ is free and transitive. In particular $r=|\beta (D)|$. This proves (i).

Now, assume that $G$ is JND but not JNA. Then by Corollary \ref{l}, $\beta (Q_8) \neq 1$ and so $2$ divides $|\beta (D)|=r$

Conversely,~ suppose ~that ~there ~exists ~a ~Dedekind ~group 
 $D \subseteq  {\underbrace{(Out~H \times \ldots \times Out~H)}_{r~copies}}\rtimes S_r$ for some $r \in \N$ and a nonabelian finite simple group $H$ such that, the usual action of $\beta (D)$ on the set $\{1,2, \ldots ,r\}$ is free and transitive. Let $G=\tilde{\nu}^{-1}(D)$. We will show that $G$ is a JND-group. By \cite[3.3.18 (ii), p.89]{rob}, ~$G$ ~is ~semisimple ~with ~CR-radical ~$K~=~ {\underbrace{(Inn~H ~\times ~\ldots ~\times ~Inn~H)}_{r~copies}}$~ and~ ${\underbrace{(Inn~H \times \ldots \times Inn~H)}_{r~copies}} \leq G \leq {\underbrace{(Aut~H \times \ldots \times Aut~H)}_{r~copies}}\rtimes S_r$. We will show that $K$ is the monolith of $G$.  Since $K=K^{(1)}$, $K$ is contained in all terms of the derived series of $G$. Further, since $G$ is semisimple, there is smallest nonnegative integer $n$ such that $G^{(n)}=G^{(n+i)}$ for all $i \in \N$. This implies that $G^{(n)}/K$ is a perfect group. But since $G^{(n)}/K$ is Dedekind and so solvable \cite[5.3.7, p.143]{rob}, $G^{(n)}=K$. Let $N$ be a nontrival normal subgroup of $G$.
Since a nontrivial normal subgroup of a semisimple group is semisimple \cite[Lemma, p.205]{kur} and a semisimple group has trivial center, $N\cap K \neq \{1\}$.
By \cite[Theorem 2, p.156]{mil}, $N\cap K =\underbrace{N_1 \times N_2 \times \ldots \times N_r}_{r~copies}$, where $N_i \trianglelefteq Inn~H$ and at least one $N_i \neq 1$. Now since $N_i=Inn~H$ and $\beta(D)$ acts transitively on $\underbrace{Inn~H \times \ldots \times Inn~H}_{r~copies}$, so $N\cap K = \underbrace{Inn~H \times \ldots \times Inn~H}_{r~copies} =K$, that is $K \subseteq N$. This proves that $K$ is the monolith of $G$. Thus $G$ is JND-group. Further, if $D$ is nonabelian Dedekind group, then $G$ is JND but not JNA.

 																   
\end{proof}
\begin{remark}
Let $G$ be finite just nonsolvable (JNS) (respectively just nonnilpotent (JNN)) group. Let $n$ be the smallest nonnegative integer such that $G^{(n)}=G^{(n+k)}$ (respectively $\gamma _n(G) =\gamma _{(n+k)}(G)$) for all $k \in \N$. Then it is easy to see that $G^{(n)}$ (respectively $\gamma_{(n+1)}(G)$) is the monolith of $G$.

The idea of the proof of the above theorem can be used to show that:

\n A finite nonsolvable group $G$ is JNS-group (respectively JNN-group) if and only if there exists a finite nonabelian simple group $H$, a natural number $r$ and a solvable (respectively nilpotent) group $D \subseteq \underbrace{(Out~H \times \ldots \times Out~H)}_{r~copies} \rtimes S_r$ such that 

\n (i) the usual action of $\beta (D)$ on the set $\{1,2, \ldots ,r \}$ is transitive, 

\n and

\n (ii) $G \cong \tilde{\nu}^{-1}(D)$,

\n where all the notations have meaning described as after the Proposition \ref{d4}.

\end{remark}
 
\smallskip
\n \textbf{Acknowledgement:}
We thank Professor Ramji Lal for suggesting the problem and for several 
stimulating discussions.

\end{document}